\documentclass[11pt,reqno]{article}
\usepackage{amssymb}
\usepackage{amsmath}
\usepackage{amsthm}
\usepackage{amsfonts}
\usepackage{url}
\usepackage{breakurl}
\usepackage{hyperref}
\usepackage{comment}

\renewcommand{\ge}{\geqslant}

\newcommand{\C}{{\mathbb C}}

\newcommand{\primes}{{\mathcal P}}


\theoremstyle{plain}

\theoremstyle{definition}

\theoremstyle{claim}
\newtheorem{claim}{Claim}
\newtheorem{remark}{Remark}

\begin{document}
\bibliographystyle{plain}
\title{On some results of Ag\'elas concerning the GRH\\
and of Vassilev-Missana concerning the\\ prime zeta function
\footnote{Keywords: cyclotomic polynomial, Generalized Riemann Hypothesis,
prime zeta function, Riemann hypothesis}
\footnote{Mathematics Subject Classifications:
11M26, 
11M06, 
11A25, 
11T22, 
11M99} 
}
\author
{Richard P.\ Brent\footnote{MSI, Australian National University,
Canberra, Australia;
also CARMA, University of Newcastle, Australia;
email {\tt <GRH@rpbrent.com>}} 
}
\maketitle

\begin{abstract}
A recent paper by Ag\'elas [\emph{Generalized Riemann Hypothesis}, 2019,
hal-00747680v3] claims to prove the
Generalized Riemann Hypothesis (GRH)
and, as a special case, the Riemann Hypothesis (RH). 
We show that the proof given
by Ag\'elas contains an error. In particular, Lemma~2.3 of
Ag\'elas is false. This Lemma~2.3 is a generalisation of Theorem~1 of
Vassilev-Missana [\emph{A note on prime zeta function and 
Riemann zeta function},
Notes on Number Theory and Discrete Mathematics, 
22, 4 (2016), 12--15]. We show by several independent methods that
Theorem~1 of Vassilev-Missana is false. 
We also show that Theorem~2 of Vassilev-Missana is false.

This note has two aims.
The first aim is to alert other researchers to these errors so they do 
not rely on faulty results in their own work.  
The second aim is pedagogical~--- we hope to show how these
errors could have been detected earlier, 
which may suggest how similar errors can be avoided, 
or at least detected at an early stage.
\end{abstract}

\vspace*{\fill}
\pagebreak[4]

\section{Introduction}				\label{sec:Intro}

In~\cite{Agelas}, Ag\'elas states\footnote{We use the word ``Claim''
for a statement which we may later prove to be false.}

\begin{claim}[Ag\'elas, Theorem 2.1]		\label{claim:1}
For any Dirichlet character $\chi$ modula $k$, the Dirichlet L-function
$L(\chi, s)$ has all its non-trivial zeros on the critical line
$\Re(s) = \frac12$.
\end{claim}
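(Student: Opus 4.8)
The plan must begin by recognising what this statement actually asserts: it is precisely the Generalized Riemann Hypothesis, namely that for every Dirichlet character $\chi$ modulo $k$ the $L$-function $L(\chi,s)$ has all of its nontrivial zeros on the line $\Re(s)=\tfrac12$. This is one of the central open problems of analytic number theory, and I must be candid at the outset that no approach known to me---or, as far as anyone is aware, to anyone---succeeds in establishing it. What follows is therefore not a genuine plan that I expect to carry through, but a description of the architecture that any proof would need, together with the single point at which every attempt known to me collapses.

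First I would fix $\chi$ modulo $k$ and pass to the completed $L$-function $\xi(\chi,s)$, which satisfies a functional equation relating $s$ to $1-s$ and admits a Hadamard product over its nontrivial zeros $\rho$. The natural move is then to convert the geometric question about the $\rho$ into an analytic inequality. Two standard reformulations are available. One is Weil's explicit formula, which balances a sum over zeros against a sum over prime powers and makes GRH equivalent to the nonnegativity of a certain quadratic functional. The other is Li's criterion, under which GRH becomes equivalent to the nonnegativity of the coefficients $\lambda_n(\chi)=\sum_\rho\bigl(1-(1-1/\rho)^n\bigr)$ extracted from the logarithmic derivative of $\xi(\chi,s)$ near $s=1$. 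Either route reduces the problem to proving one infinite family of inequalities.

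The decisive step---and here the entire difficulty resides---is to prove that positivity. This is not a technical lemma to be dispatched by computation: verifying the required inequality is \emph{logically equivalent} to GRH, so no honest reduction can gain ground, and there is simply nothing to ``prove'' that is strictly easier than the original. In practice every attempted shortcut smuggles in an unjustified analytic claim at exactly this juncture, typically by taking a Dirichlet series or Euler-product identity established for $\Re(s)>1$ and treating its closed form as valid throughout the critical strip, where the underlying series no longer converges. The route of Ag\'elas follows this pattern, channelling the needed control over the zeros through an identity for the prime zeta function (his Lemma~2.3); the obstacle I would expect to meet is therefore the very one that this note will expose, that such control cannot be obtained by formal continuation of a series convergent only for $\Re(s)>1$. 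In short, I have no plan that surmounts this obstacle, and diagnosing precisely where it is illegitimately bypassed is the purpose of what follows.
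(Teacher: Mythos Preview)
Your reading of the situation is exactly right, and it matches the paper's own stance. The paper does \emph{not} prove Claim~\ref{claim:1}; it labels it a ``Claim'' precisely because it is the Generalized Riemann Hypothesis, and the paper's explicit position is that ``Theorem~2.1 of Ag\'elas (the GRH) may be true, but has not been proved.'' So there is no proof in the paper to compare your proposal against, and your candid admission that you have no plan that surmounts the positivity obstacle is the honest and correct response.

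Moreover, your diagnosis of where Ag\'elas's argument must fail --- at the prime-zeta identity of his Lemma~2.3, obtained by treating a relation valid only for $\Re(s)>1$ as though it extended into the critical strip --- is precisely the point the paper goes on to establish in \S\ref{sec:disproof}. You have anticipated both the conclusion and the locus of the error.
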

This is the \emph{Generalized Riemann Hypothesis},
probably formulated by Adolf Piltz in 1884 
(see Davenport~\cite[p.~124]{Davenport}).	
A special case, which corresponds to the principal character
$\chi_0(n) = 1$ and the Riemann zeta-function $\zeta(s)$,
is the well-known \emph{Riemann Hypothesis}~\cite{Sarnak}.

Ag\'elas defines the half-plane
${\mathcal A} := \{s\in\C : \Re(s) > 1\}$, and
two Dirichlet series (convergent for $s \in \mathcal A$)
\[ 
P(\chi,s) := \sum_{p\in\primes}\chi(p)p^{-s}
\]
and
\[ 
P_2(\chi,s) := \sum_{p\in\primes}\chi(p)^2p^{-s},
\]
where $\primes$ is the set of primes $\{2, 3, 5, \ldots\}$.

When trying to understand the proof of Claim~\ref{claim:1} 
by Ag\'elas, we considered the case of the Riemann zeta-function.
Since this was sufficient to find an error in the proof, we
only need to consider this case. Thus we can take $\chi(p) = 1$, 
so $P(\chi,s)$ and $P_2(\chi,s)$ both reduce to
the usual \emph{prime zeta function}~\cite{Froberg}
\[
P(s) := \sum_{p\in\primes}p^{-s}.
\]
This function is also considered by
Vassilev-Missana~\cite{Vassilev-Missana}.
It is well-known (and a proof may be found in~\cite[p.~188]{Froberg}) that, 
for $\Re(s) > 1$,
\begin{equation}				\label{eq:P_Mobius}
P(s) = \sum_{k=1}^\infty \frac{\mu(k)}{k}\log\zeta(ks).
\end{equation}

Vassilev-Missana states
\begin{claim}[Vassilev-Missana, Theorem 1]		\label{claim:2}
For integer%
\footnote{It is not clear why Vassilev-Missana imposes such a
strong restriction on $s$; we might expect the relation to hold
for all $s \in \mathcal A$ or perhaps (using analytic continuation)
for almost all $s \in \{z\in\C: \Re(z) > 0\}$.}
$s > 1$, the relation
\[
(1-P(s))^2 = \frac{2}{\zeta(s)} - 1 + P(2s)
\;\;\text{ holds.}
\]
\end{claim}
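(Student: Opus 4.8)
Since Claim~\ref{claim:2} is flagged as something that may turn out to be false, and since both the hypothesis ``$s$ an integer $>1$'' and the lopsided right-hand side look suspicious, my first step would be to \emph{test} the relation numerically rather than attempt a proof. Taking $s=2$ and using the classical values $P(2)\approx 0.4522474200$, $\zeta(2)=\pi^2/6$ (so $2/\zeta(2)\approx 1.2158542038$) and $P(4)\approx 0.0769931397$, the left side is $(1-P(2))^2\approx 0.3000329$ while the right side is $2/\zeta(2)-1+P(4)\approx 0.2928473$. These disagree already in the third decimal, so the claim is false; what is then worth doing is to \emph{prove} it false and to pin down exactly where Vassilev-Missana's relation breaks down.

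For that I would pass to Dirichlet series in the half-plane $\Re(s)>1$, where every manipulation below is justified by absolute convergence. Squaring $P(s)=\sum_p p^{-s}$ and separating the diagonal from the off-diagonal pairs of primes gives $P(s)^2=P(2s)+2\sum_{p<q}(pq)^{-s}$, so
\[
(1-P(s))^2 = 1 - 2P(s) + P(2s) + 2\sum_{p<q}(pq)^{-s}.
\]
On the right, I would group the M\"obius series $1/\zeta(s)=\sum_{n\ge 1}\mu(n)n^{-s}$ according to the number of (necessarily distinct) prime factors of $n$, obtaining
\[
\frac1{\zeta(s)} = 1 - P(s) + \sum_{p<q}(pq)^{-s} + \sum_{\substack{n\ \text{squarefree}\\ \Omega(n)\ge 3}}\mu(n)\,n^{-s}.
\]
Feeding both expansions into the claimed identity, the four terms $1$, $P(s)$, $P(2s)$ and $\sum_{p<q}(pq)^{-s}$ cancel, and Claim~\ref{claim:2} is seen to be \emph{equivalent} to the single assertion
\[
T(s):=\sum_{\substack{n\ \text{squarefree}\\ \Omega(n)\ge 3}}\mu(n)\,n^{-s}=0.
\]
(Consistently with the numerics, the left-minus-right discrepancy is exactly $-2\,T(s)$.)

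It then remains to show $T(s)$ is not zero. Writing $T(s)=\sum_{k\ge 3}(-1)^k e_k(s)$ with $e_k(s):=\sum_{\Omega(n)=k,\ n\ \text{squarefree}}n^{-s}>0$, the bound $e_{k+1}(s)\le \frac{P(s)}{k+1}\,e_k(s)$ (each squarefree $n$ with $\Omega(n)=k+1$ arises as $mp$ in $k+1$ ways) together with $P(s)\le P(2)<1$ for $s\ge 2$ makes $(e_k(s))_{k\ge 3}$ strictly decreasing to $0$, so the alternating series has the sign of its first term: $-e_3(s)<T(s)<-e_3(s)+e_4(s)<0$ for every integer $s\ge 2$. (One also sees $T(s)\sim -30^{-s}$ as $s\to\infty$, since $30=2\cdot 3\cdot 5$ is the least integer with three prime factors, so the relation fails for all large $s$ as well.) I expect no real obstacle here; the one place needing care is the regrouping of $1/\zeta(s)$, which is legitimate purely because the series converges absolutely for $\Re(s)>1$. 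The outcome is a clean disproof of Claim~\ref{claim:2} together with a precise diagnosis of the error: Vassilev-Missana's relation silently discards the contribution of every integer with three or more prime factors, so it is at best an approximation --- and, since $T(s)\to 0$ rapidly, an increasingly accurate one as $s$ grows.
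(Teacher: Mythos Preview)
Your proposal is correct and mirrors the paper's own disproof: the numerical check at $s=2$ is the paper's Method~2, and the Dirichlet-series comparison isolating the contribution of squarefree integers with three or more prime factors is the paper's Method~1 (where the paper simply exhibits the single mismatched coefficient at $n=30$ and invokes uniqueness of Dirichlet series). You go somewhat further than the paper by identifying the exact discrepancy as $-2T(s)$ and proving $T(s)<0$ for every integer $s\ge 2$ via an alternating-series estimate; this gives a quantitative diagnosis of the error---and your asymptotic $T(s)\sim -30^{-s}$ recovers precisely the paper's observation about $n=30$---at the cost of a little extra bookkeeping. The paper also offers three analytic-continuation arguments (Methods~3--5, based on the singularity at $s=1$, the zeros of $\zeta$, and the natural boundary of $P$) that you do not use, but none of them is needed for the disproof.
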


Ag\'elas states 
\begin{quotation}
``Lemma 2.3 appears as an extension of Theorem 1 of Vassilev-Missana (2016),
we give here the details of the proof as it is at the heart of the Theorem
obtained in this paper. For this, we borrow the arguments used in 
Vassilev-Missana (2016).''
\end{quotation}
\pagebreak[3]

\noindent He then states
\begin{claim}[Ag\'elas, Lemma 2.3]			\label{claim:3}
For $s \in \mathcal A$, we have
\[
(1-P(\chi,s))^2L(\chi,s) - (P_2(\chi,2s)-1)L(\chi,s) = 2.
\]
\end{claim}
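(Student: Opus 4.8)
The plan is to reduce Claim~\ref{claim:3} to a bare identity between Dirichlet series and then check that identity term by term. Specializing to $\chi\equiv 1$ as above (so $P(\chi,s)=P(s)$, $P_2(\chi,2s)=P(2s)$, $L(\chi,s)=\zeta(s)$), Claim~\ref{claim:3} reads $(1-P(s))^2\zeta(s)-\bigl(P(2s)-1\bigr)\zeta(s)=2$, i.e.\ --- dividing by $\zeta(s)\ne 0$, valid for $\Re(s)>1$ --- exactly Claim~\ref{claim:2}. So it suffices to treat Claim~\ref{claim:2}; the general character case would follow by repeating the bookkeeping with $\zeta(s)$ replaced by $L(\chi,s)$ and the M\"obius series $\zeta(s)^{-1}=\sum_n\mu(n)n^{-s}$ replaced by $L(\chi,s)^{-1}=\sum_n\mu(n)\chi(n)n^{-s}$ (valid because $\chi$ is completely multiplicative), all rearrangements being justified by absolute convergence for $\Re(s)>1$.

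First I would expand the left-hand side of Claim~\ref{claim:2} over the primes. From $(1-P(s))^2=1-2P(s)+P(s)^2$ and
\[
P(s)^2=\sum_{p\in\primes}p^{-2s}+2\sum_{p<q}(pq)^{-s}=P(2s)+2\sum_{p<q}(pq)^{-s},
\]
the two occurrences of $P(2s)$ cancel, and after dividing by $2$ the claim collapses to the single assertion
\[
1-P(s)+\sum_{p<q}(pq)^{-s}=\frac{1}{\zeta(s)}.
\]
I would then write the right-hand side as $\zeta(s)^{-1}=\sum_{n\ge 1}\mu(n)n^{-s}=1-\sum_{p}p^{-s}+\sum_{p<q}(pq)^{-s}-\sum_{p<q<r}(pqr)^{-s}+\cdots$, grouped by the number $\omega(n)$ of distinct prime factors of $n$. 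The contribution of the $n$ with $\omega(n)\le 2$ is precisely $1-P(s)+\sum_{p<q}(pq)^{-s}$, so Claim~\ref{claim:2} is equivalent to the vanishing of the remaining tail
\[
T(s):=\sum_{p<q<r}(pqr)^{-s}-\sum_{p<q<r<t}(pqrt)^{-s}+\cdots,
\]
i.e.\ the contribution of all squarefree $n$ with $\omega(n)\ge 3$.

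The main obstacle --- and, I expect, a fatal one --- is the last step: proving $T(s)\equiv 0$. For real $s>1$ the series $T(s)$ is dominated by its leading term $30^{-s}$ (the next three-prime terms $42^{-s},66^{-s},70^{-s},\dots$ have the same sign, and the four-prime terms that are subtracted only begin at $210^{-s}$), so $T(s)>0$ strictly; the natural route therefore yields not a proof but a refutation. Concretely, at $s=2$ one finds $(1-P(2))^2\approx 0.30003$ against $2/\zeta(2)-1+P(4)\approx 0.29285$, a discrepancy of about $0.0072=2\,T(2)$, and a routine truncation-error estimate for the Dirichlet series involved confirms $T(2)>0$ rigorously --- a genuine counterexample to Claim~\ref{claim:2}, hence to Claim~\ref{claim:3}. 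The pedagogical moral is that once the claimed identity is collapsed onto the M\"obius expansion of $1/\zeta(s)$, the spurious cancellation of all squarefree $n$ with $\omega(n)\ge 3$ becomes impossible to overlook; rather than hunt for a proof, I would write down the counterexample.
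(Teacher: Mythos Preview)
Your proposal is correct: you rightly conclude that Claim~\ref{claim:3} is false rather than provable, and your refutation is essentially the paper's own---specialize to $\chi\equiv 1$, then observe via the M\"obius expansion of $1/\zeta(s)$ that the identity would force the contribution of squarefree $n$ with $\omega(n)\ge 3$ (your $T(s)$, leading term at $n=30$) to vanish, which it does not; you also supplement this with the same numerical check at $s=2$. The only cosmetic difference is that the paper invokes uniqueness of Dirichlet series directly at the coefficient $a_{30}$, whereas you first isolate $T(s)$ algebraically and then argue $T(s)>0$; the content is the same.
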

In the case that we consider, namely $L(\chi,s) = \zeta(s)$,
both Claim~\ref{claim:2} and Claim~\ref{claim:3} amount to the same
relation, which we can write in an equivalent form as
\begin{equation}					\label{eq:the_claim}
\frac{2}{\zeta(s)} = 2 - 2P(s) + (P(s))^2 - P(2s).
\end{equation}

In \S\ref{sec:disproof} we
show that~\eqref{eq:the_claim} is false.
This implies that Lemma~2.3 of Ag\'elas is false, as is Theorem~1 of
Vassilev-Missana.  Theorem~2.1 of Ag\'elas (the GRH) may be true,
but has not been proved. Lemmas~2.4 and 2.5 of Ag\'elas depend on 
Lemma~2.3, so are most likely false.
Theorem~2 of Vassilev-Missana is also false, 
as discussed in~\S\ref{sec:Thm2}.

\section{Disproving (\ref{eq:the_claim})}       \label{sec:disproof}

We give several methods to disprove~\eqref{eq:the_claim}.\\

\noindent{\bf Method 1.}
Expand each side of~\eqref{eq:the_claim} as a Dirichlet series
$\sum a_n n^{-s}$.
On the right-hand side (RHS), the only terms with nonzero coefficients $a_n$
are for integers $n$ of the form $p^\alpha q^\beta$, where $p$ and $q$ are
primes, $\alpha \ge 0$, and $\beta \ge 0$.  However, on the left-hand side 
(LHS), we find $a_{30} = -2 \ne 0$, since $30 = 2\times 3\times 5$ has
three distinct prime factors, implying that $\mu(30) = -1$.
By the uniqueness of Dirichlet series that converge absolutely for all
sufficiently large values of $\Re(s)$ \cite[Thm.~4.8]{Hildebrand},
we have a contradiction, so~\eqref{eq:the_claim} is false. 
\qed
\begin{remark}
{\rm
Instead of $30$ we could take any squarefree positive integer 
with greater than two prime factors.  A somewhat analogous situation arises
when bounding the coefficients of the cyclotomic polynomial
$\Phi_n(x)$ (see for example \cite{Maier,Riesel}).
Migotti~\cite{Migotti} showed that if $n$ has at most two distinct odd 
prime factors, then the coefficients of 
$\Phi_{n}(x)$ are all in the set $\{0,-1,1\}$. 
However, this is not necessarily true if $n$ has greater than two distinct
odd prime factors.  For example, $\Phi_{105}(x)$ 
contains terms $-2x^7$ and $-2x^{41}$.
\qed	
} 
\end{remark}

\noindent{\bf Method 2.} 
We can evaluate both sides of~\eqref{eq:the_claim} numerically
for one or more convenient values of~$s$.
If we take $s = 2k$ for some positive integer $k$, then
the LHS of~\eqref{eq:the_claim}
can easily be evaluated using Euler's formula
\[\zeta(2k) = \frac{(-1)^{k-1}(2\pi)^{2k}}{2\cdot (2k)!}\,B_{2k}\,,\]
where $B_{2k}$ is a Bernoulli number.
The RHS can be evaluated by using~\eqref{eq:P_Mobius}.
Taking $k=1$, i.e.\ $s=2$, the LHS is
$12/\pi^2 = 1.2158542$ and the RHS is $1.2230397$ 
(both values correct to $7$ decimals). 
Thus, $|\text{LHS}-\text{RHS}| > 0.007$. 
This is a contradiction, so~\eqref{eq:the_claim} is false.	\qed
\begin{remark}
{\rm
It is always a good idea to verify identities numerically whenever it is
convenient to do so. A surprising number of typos and more serious errors
can be found in this manner. Early mathematicians such as Euler, Gauss,
and Riemann were well aware of the value of numerical computation, even
though they lacked the electronic tools and mathematical software
that we have today.

If we had followed the philosophy of 
``experimental mathematics''~\cite{Borwein}, we would have attempted
method~2 first. It is only because we were familiar with the computation
of cyclotomic polynomials that we thought of using method~1 before
trying method~2.						\qed
} 
\end{remark}

\noindent{\bf Method 3.}
We consider the behaviour of each side of~\eqref{eq:the_claim} near
$s = 1$. On the LHS there is a simple zero at $s=1$, since the denominator
$\zeta(s)$ has a simple pole. On the RHS there is a logarithmic
singularity of the form
$a(\log(s-1))^2 + b\log(s-1) + O(1)$. 
This is a contradiction, so~\eqref{eq:the_claim} is false.	\qed\\

\noindent{\bf Method 4.}
For this method we need to assume that both sides of~\eqref{eq:the_claim}
have been extended by analytic continuation into the critical strip.
The LHS of~\eqref{eq:the_claim} has singularities precisely where
$\zeta(s)$ has zeros.
Using~\eqref{eq:P_Mobius}, $P(s)$ has singularities at these zeros
(say $\rho$), and also at $\rho/2$, $\rho/3, \ldots$.
Thus, on the line $\Re(s) = 1/2$, the LHS has infinitely many 
simple poles\footnote{This is 
because $\zeta(s)$ has infinitely many simple zeros 
on the critical line~\cite{Heath-Brown}.}
and no logarithmic singularities,
whereas the RHS has infinitely many logarithmic singularities.
This is a contradiction, so~\eqref{eq:the_claim} is false.	\qed\\

\noindent{\bf Method 5.}
The LHS of~\eqref{eq:the_claim} has a meromorphic continuation to the
whole of the complex plane, with poles wherever $\zeta(s)$ has zeros.
On the other hand, it is known that $P(s)$ has a natural boundary
at the line $\Re(s) = 0$, and by extending the argument of Method~4
we can show that the RHS of~\eqref{eq:the_claim} also has a natural
boundary at this line. Again, this is a contradiction. 
\qed

\pagebreak[3]

\section{Theorem 2 of Vassilev-Missana is false}	\label{sec:Thm2}

Vassilev-Missana~\cite[Theorem 2]{Vassilev-Missana} makes the following
claim.
\begin{claim}					\label{claim:4}
For integer\footnote{As before, it is not clear why the integer
restriction needs to be imposed.}
 $s > 1$,
\begin{equation}				\label{eq:thm2}
P(s) =
1-\sqrt{2/\zeta(s) - \sqrt{2/\zeta(2s) - \sqrt{2/\zeta(4s) - 
  \sqrt{2/\zeta(8s) - \cdots}}}}
\end{equation}
\end{claim}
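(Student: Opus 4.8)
The plan is to show that Claim~\ref{claim:4} is false by deriving a contradiction from the nested-radical identity~\eqref{eq:thm2}. The key observation is that the identity of Vassilev-Missana's Theorem~2 is obtained, formally, by iterating Claim~\ref{claim:2}: solving $(1-P(s))^2 = 2/\zeta(s) - 1 + P(2s)$ for $1-P(s)$ gives $1-P(s) = \sqrt{2/\zeta(s) - 1 + P(2s)}$, and then substituting the analogous expression for $P(2s)$, then $P(4s)$, and so on. Since we have already established (by any of Methods~1--5) that Claim~\ref{claim:2} is false, it is essentially automatic that a formula derived purely from it should also fail; nonetheless I would give a direct, self-contained disproof so the section does not logically depend on which of the earlier methods the reader accepts.

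First I would fix a convenient value, say $s = 2$, and argue that the nested radical on the right-hand side of~\eqref{eq:thm2} converges: as $k\to\infty$ we have $\zeta(2^k s)\to 1$, so $2/\zeta(2^k s)\to 2$, and the tail radicals $\sqrt{2 - \sqrt{2 - \sqrt{2 - \cdots}}}$ converge (the relevant fixed-point equation $x = \sqrt{2-x}$ has the solution $x=1$, and one checks the iteration is contracting near there). Hence the right-hand side is a well-defined real number, which I would then evaluate numerically to several decimal places by truncating the radical at a suitable depth and bounding the truncation error. Separately, $P(2) = \sum_p p^{-2} = 0.4522474200\ldots$ is a known constant (computable from~\eqref{eq:P_Mobius}). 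I expect the two values to disagree, giving the contradiction; the main obstacle is purely bookkeeping --- making the convergence argument and the truncation-error bound rigorous enough that the numerical discrepancy is unambiguous.

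As an alternative (or supplementary) method, I would point out that one can read off a contradiction symbolically without any numerics, by the same Dirichlet-series reasoning as in Method~1: rearranging~\eqref{eq:thm2} one is forced back to a relation equivalent to~\eqref{eq:the_claim} (or to its iterates), whose right-hand side, expanded as $\sum a_n n^{-s}$, is supported only on integers with at most two distinct prime factors, whereas the left-hand side has a nonzero coefficient at $n=30$. A third route is to examine the behaviour as $s\to 1^+$: then $\zeta(s)\to\infty$, so $2/\zeta(s)\to 0$, and the outermost radical in~\eqref{eq:thm2} tends to $\sqrt{-\,(\text{a positive limit})}$, i.e.\ the expression becomes the square root of a negative number, while $P(s)\to\infty$ along the real axis. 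This sign/branch failure already shows the stated identity cannot hold as written for $s$ near $1$, and the integer restriction $s>1$ does not rescue it, since the numerical disproof at $s=2$ stands on its own.

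Thus the expected structure of the proof is: (i) establish convergence of the nested radical at $s=2$; (ii) compute its value numerically with a rigorous error bound; (iii) compare with the known value of $P(2)$ and observe the discrepancy; and (iv) remark that the symbolic Dirichlet-series argument and the $s\to1^+$ branch argument give independent confirmations. I anticipate no serious difficulty --- the only care needed is in the convergence and error-estimate steps of part~(i)--(ii).
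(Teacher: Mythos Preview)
Your proposal is correct and contains both of the paper's arguments, but with the emphasis reversed. The paper's \emph{primary} proof is exactly the reduction you sketch in your opening paragraph and in your ``alternative'' symbolic route: assume~\eqref{eq:thm2}, replace $s$ by $2s$ to collapse the inner radical, obtain $1-P(s)=\sqrt{2/\zeta(s)-(1-P(2s))}$, square, and recover~\eqref{eq:the_claim}, which has already been shown false. This takes three lines and needs no convergence analysis or error bounds. The paper then offers the numerical check at $s=2$ (your steps (i)--(iii)) only as a remark, reporting $P(2)\approx 0.4522$ versus RHS $\approx 0.4588$. So you have the right ideas; you would just save yourself the bookkeeping of parts (i)--(ii) by promoting the reduction argument to the main proof and demoting the numerics to a confirmatory remark. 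Your third route, the $s\to 1^{+}$ branch observation, is not in the paper; it is suggestive but, as you yourself note, does not on its own touch the stated domain of integer $s>1$.
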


\begin{proof}[Proof that Claim $\ref{claim:4}$ is incorrect]
Assume that Claim~$\ref{claim:4}$ is correct.
Replacing $s$ by $2s$ and using the result
to simplify~\eqref{eq:thm2}, we obtain
\begin{equation}			\label{eq:reverse-eng}
1 - P(s) = \sqrt{2/\zeta(s) - (1-P(2s))}.
\end{equation}
Squaring both sides of~\eqref{eq:reverse-eng} and simplifying 
gives~\eqref{eq:the_claim}, but we showed in \S\ref{sec:disproof}
that~\eqref{eq:the_claim} is incorrect. This contradiction shows
that~Claim~$\ref{claim:4}$ is incorrect.
\end{proof}

\begin{remark}
{\rm
An alternative is to resort to a variation on method~2 above.
With $s=2$
we find numerically that 	
$P(s) \approx 0.4522$		
and 
\[
1-\sqrt{2/\zeta(s) - \sqrt{2/\zeta(2s) - \sqrt{2/\zeta(4s) - \cdots}}}
\approx 0.4588 \ne P(s),	
\]
where the numerical values are correct to $4$ 
decimal places. Thus,~\eqref{eq:thm2} is incorrect.
} 
\end{remark}
\begin{remark}					\label{remark:defn}
{\rm
It may not be clear what the infinite expression
on the RHS of~\eqref{eq:thm2} means. 
We state Claim~\ref{claim:4} more precisely
as
\begin{equation}				\label{eq:precise}			
P(s) = 
1- \lim_{n\to\infty}
   \sqrt{2/\zeta(s) - \sqrt{2/\zeta(2s) - \sqrt{2/\zeta(4s) - \cdots 
  \sqrt{2/\zeta(2^n s)}}}}\;.
\end{equation}
The limit exists and is real if $s$ is real, positive, and sufficiently
large.

To evaluate~\eqref{eq:precise} numerically, we start with
a sufficiently large value of $n$, then evaluate the nested
square roots 
in~\eqref{eq:precise} by working from right to left, using the
values of $\zeta(2^n s), \zeta(2^{n-1}s), \ldots, \zeta(2s), \zeta(s)$.
\qed
} 
\end{remark}

\pagebreak[3]

\pagebreak[4]
\section*{Postscript (21 March 2021)}

Kannan Soundararajan kindly pointed out that there has been some discussion
of the paper by Vassilev-Missana~\cite{Vassilev-Missana} on the
MathOverflow website. For this, see
\url{https://mathoverflow.net/questions/288847/}.

In particular, the anonymous user Lucia pointed out the Dirichlet series
argument (see Method~1 above) for disproving~\eqref{eq:the_claim}.
Despite this, we have not found any erratum or retraction by the
author of~\cite{Vassilev-Missana}.

Since the comments by user Klangen on MathOverflow 
regarding the numerical investigation of Claim~\ref{claim:4}
are inconclusive,
we mention that, for the accurate numerical evaluation
of~\eqref{eq:precise}, it is desirable to use
\begin{equation}				\label{eq:precise2}	
P(s) = 
1- \lim_{n\to\infty}
   \sqrt{2/\zeta(s) - \sqrt{2/\zeta(2s) - \sqrt{2/\zeta(4s) - \cdots 
  \sqrt{2/\zeta(2^n s)-1}}}}\;,
\end{equation}
since this has the same limit,
but converges faster as $n \to \infty$.
An explanation
is that, when $n$ is large, we have
$\zeta(2^n s) = 1 + O(2^{-2^n s}) \approx 1$, so the ``tail'' of the 
expression~\eqref{eq:thm2} is approximately
\[
X := \sqrt{2-\sqrt{2-\sqrt{2-\cdots}}}\;\;.
\]
By squaring we see that $X$ satisfies the quadratic equation
$X^2 = 2-X$, whose only positive real root is $X=1$.
Thus, it is better to approximate the tail by $1$, as
in~\eqref{eq:precise2}, than by $0$, as in~\eqref{eq:precise}.
\end{document}